\documentclass[11pt]{amsart}
\usepackage{amssymb}
\usepackage{bm}
\usepackage{graphicx}
\usepackage[centertags]{amsmath}
\usepackage{amsfonts}
\usepackage{amsthm}
\usepackage{a4}
\usepackage{latexsym}
\usepackage{amsmath}
\usepackage{amsfonts}
\usepackage{amssymb}
\usepackage{amscd}
\usepackage{amsthm}
\usepackage{layout}
\usepackage{color}
\usepackage{tikz}

\theoremstyle{plain}
\theoremstyle{definition}
\newtheorem{theorem}{Theorem}[section]

\newtheorem{corollary}[theorem]{Corollary}
\newtheorem{definition}[theorem]{Definition}
\newtheorem{remark}[theorem]{Remark}

\newtheorem*{thm*}{Theorem}

\newtheorem*{dfn*}{Definition}

\newtheorem*{cor*}{Corollary}

\newtheorem*{prp*}{Proposition}

\newtheorem*{rmk*}{Remark}

\newcommand{\vertiii}[1]{{\left\vert\kern-0.25ex\left\vert\kern-0.25ex\left\vert #1 \right\vert\kern-0.25ex\right\vert\kern-0.25ex\right\vert}}

\newtheorem{defn}{Definition}[section]
\newtheorem{qn}[defn]{Question}

\theoremstyle{remark}

\theoremstyle{plain}
\newtheorem{thm}[defn]{Theorem}
\newtheorem{lem}[defn]{Lemma}
\newtheorem{cor}[defn]{Corollary}

\renewcommand{\thefootnote}{\arabic{footnote}}

\long\def\symbolfootnote[#1]#2{\begingroup%
\def\thefootnote{\fnsymbol{footnote}}\footnote[#1]{#2}\endgroup}

\begin{document}
\title[Ball covering property]{Ball covering property on operators and Calkin algebras}
\author[S. Siju]{Sreejith Siju}
\author[B. Zheng]{Bentuo Zheng}
\address{sreejithsiju5@gmail.com, ssiju@memphis.edu, Department of Mathematical sciences, University of Memphis, TN, USA, 38152}
\address{bzheng@memphis.edu, Department of Mathematical sciences, University of Memphis, TN, USA, 38152}

\keywords{ball covering property, unconditional basis, Calkin algebra}
\date{}

\maketitle

\symbolfootnote[0]{}

\begin{abstract}
A Banach space $X$ is said to have the ball covering property (BCP) if the unit sphere of $X$ can be covered by countably many open balls $B(x_i, r_i)$ with $r_i\leq \|x_i\|$ for each $i\in\mathbb{N}$. If there are $R, \delta>0$ so that $r_i\leq R$ and $\|x_i\|-r_i>\delta$ for all $i\in\mathbb{N}$, then we say that $X$ has the uniform ball covering property (UBCP). In this paper, we show that if $X$ has an $1$-unconditional basis or $X$ is an $1$-complemented subspace of a Banach space with a shrinking $1$-unconditional basis, then the Calkin algebra $\mathcal{B}(X)/\mathcal{K}(X)$ fails the BCP. It is also shown that if $X$ has a shrinking unconditional basis with unconditional constant less than 2, then $\mathcal{B}(X)$ has the UBCP. 
\end{abstract}

\section{Introduction}
A normed space $X$ is said to have the {\em ball-covering property} (BCP) \cite{C} if its unit sphere can be covered by countably many closed, or equivalently, open balls $(B_n)$, none of which contains the origin of $X$. The centers of the balls are called the BCP points. If in addition, the radii of the covering balls are bounded, then we say $X$ has the {\em strong ball-covering property} (SBCP) \cite{LZh2}. If $X$ has the SBCP and there exists an open ball $B_0$ containing the origin such that $B_n\cap B_0=\phi$ for all $n\in\mathbb{N}$, then $X$ is said to have the {\em uniform ball-covering property} (UBCP) \cite{LZh2}. It follows directly that the UBCP implies the SBCP and the SBCP implies the BCP. But in general, these properties are different \cite{LZh2}.

Separable normed spaces automatically have the BCP. It was shown in \cite{CCL} that BCP is not preserved under linear isomorphisms by providing renormings of $\ell_{\infty}$. Actually it is easy to see that $\ell_{\infty}$ has the BCP while $L_{\infty}$ can be shown to fail the BCP \cite{LZh1}. Although the BCP is a geometric property, it is closely related to the topological property of the dual spaces. To be more precise, V. P. Fonf and C. Zanco \cite{FZ} proved that $X^*$ is $w^*$-separable if and only if $X$ can be ($1+\varepsilon$)-equivalently renormed to have the SBCP for all $\varepsilon>0$. The BCP is also connected with other commonly used properties, such as the Radon-Nikodym property, uniform convexity and dentability \cite{CWWZ, SC2, SC3}.

There are quite a few recent advances about the BCP. It is proved by Z. Luo and B. Zheng \cite{LZh1} that $\left(\sum\oplus X_k\right)_E$ has the BCP if and only if each summand $X_k$ has the BCP, where $E$ is either a Lorentz sequence space, a separable Orlicz sequence space, or $\ell_\infty$. It was also shown in \cite{LZh1} that if $(\Omega,\Sigma,\mu)$ is a separable measure space, then the space of Bochner integrable functions $L^p(\mu,X)$ has the BCP if and only if $X$ has the BCP. In general, the BCP does not pass to subspaces and quotients \cite{CCL}. If 
$X$ is the $\ell_p$ direct sum of $Y$ and $Z$, then $X$ has the BCP if and only if $Y$ and $Z$ have the BCP. However, in \cite{LLLZ}, M. Liu, R. Liu, J. Lu and B. Zheng proved that $\mathcal{B}(H)$ has the BCP and $L_{\infty}$ is an $1$-complemented subspace of $\mathcal{B}(H)$ which fails the BCP. 

It is known that the BCP does not pass to quotient spaces. The first such example is $\ell_{\infty}/c_0$ which was given by L. Cheng, Q. Cheng and X. Liu \cite{CCL}. Recently, Q. Bao, R. Liu and J. Shen showed that if $X$ has a shrinking $1$-unconditional basis, then the Calkin algebra $\mathcal{B}(X)/\mathcal{K}(X)$ fails the BCP. However, the classical case $\mathcal{B}(\ell_1)/\mathcal{K}(\ell_1)$ is not included. A more general question is whether the shrinkingness can be removed from the assumption. \\

\noindent
{\bf Question 1:} {\it Let $X$ be a Banach space with an $1$-unconditional basis. Does the Calkin algebra $\mathcal{B}(X)/\mathcal{K}(X)$ fail the BCP}?\\

In Section 2, we will give an affirmative answer to Question 1.  Moreover, by using the machinery of the Markushevich basis, we also proved that if $X$ is an $1$-complemented subspace of a Banach space with a shrinking $1$-unconditional basis, then the Calkin algebra $\mathcal{B}(X)/\mathcal{K}(X)$ fails the BCP. It should be pointed out that the unconditionality requirement in Question 1 is essential and can not be replaced by shrinking Schauder basis. For example, if we take $X$ to be the Argyros-Haydon space \cite{AH}, then every bounded linear operator on $X$ is a scalar multiple of the identity plus a compact operator. An immediate observation is that $\mathcal{B}(X), \mathcal{K}(X)$ and $\mathcal{B}(X)/\mathcal{K}(X)$ all have the BCP. P. Motakis \cite{M1} proved that for every compact metric space $K$ there exists a Banach space $X_K$ whose Calkin algebra is isometric to $C(K)$. It is obvious that such $C(K)$ spaces are separable and hence have the BCP.  

It was proved in \cite{LLLZ} that $\mathcal{B}(c_0)$ and $\mathcal{B}(\ell_p)$ $(1\leq p<\infty)$ have the UBCP. All these spaces have $1$-unconditional bases. A natural question is the following: \\

\noindent
{\bf Question 2:} {\it Let $X$ be a Banach space with an unconditional basis. Does the space $\mathcal{B}(X)$ have the UBCP}?\\

In \cite{BLS}, it was shown that if $X$ has a monotone shrinking unconditional basis with unconditional constant less than $2$, then $\mathcal{B}(X)$ has the BCP. In Section 3, we prove that the monotonicity is redundant and $\mathcal{B}(X)$ has the UBCP. Actually, we show that if $X$ is a Banach space with $X^*$ separable and $Y$ has an unconditional basis with unconditional constant less than $2$, then the space $\mathcal{B}(X, Y)$ has the UBCP.

\section{BCP of Calkin Algebra}
 Let $X$ be a Banach space with a normalized Schauder basis $(e_n)_n$. Then for every $x \in X$ can be written uniquely as $x =\sum_n e_n^*(x) e_n$, where $(e_n^*)_n$ denotes the sequence of 
 biorthogonal functionals for $(e_n)_n$. As usual, we denote by 
 $$\hbox{supp } x = \{n \in \mathbb{N}: \ e_n^*(x) \not =0\}.$$
 We consider $c_{00}(X)$ the space of all elements of $X$ with finite supports. For a subset $A$ of $X$, $[A]$ denotes the linear span of $A$. For $x \in c_{00}(X)$, 
we use $m(x, (e_i))$ and $M(x, (e_i))$ to denote respectively the minimum and maximum of $\hbox{supp } x$ with respect to the basis $(e_i)$. The set $\{i\in\mathbb{N}: m(x, (e_i))\leq i\leq M(x, (e_i))\}$ is denoted by $I(x, (e_i))$. When there is no confusion about the basis, we simply write $m(x),M(x)$ and $I(x)$ respectively. We say that a sequence  $(x_j)_j$ in $c_{00}(X)$ is a block sequence of  $(e_n)_n$ if there exist $0 = q_1 < q_2 <\cdots$ such
that, for all $j \geq 1$, $x_j$ is in $[e_i :\  q_{j-1} < i \leq q_j]$. 

Let $T$ be a bounded linear operator from a Banach space $X$ into a Banach space $Y$. The essential norm of $T$ is defined by
$$
\|T\|_e=\inf\{\|T-K\|: K\in \mathcal{K}(X, Y)\}.
$$

\begin{definition}
A Banach space $X$ is said to have property (E) if for every bounded linear operator $T\in \mathcal{B}(X)$, there exists a normalized weakly null sequence $(x_n)\subset X$ such that
$$
\|T\|_e = \lim \|Tx_k\|.
$$
\end{definition}

\begin{lem}\label{lemm}
    Let $X$ be an $1$-complemented subspace of a Banach space $Y$ with a bimonotone shrinking Schauder basis $(e_n)_n$. Then $X$ has property (E).
    \end{lem}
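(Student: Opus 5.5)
Let $P:Y\to X$ be a norm-one projection onto $X$, and for $n\ge 0$ let $P_n$ and $Q_n=I-P_n$ be the basis projections onto $[e_1,\dots,e_n]$ and $[e_i:i>n]$. Bimonotonicity gives $\|P_n\|=\|Q_n\|=1$. Fix $T\in\mathcal B(X)$ and set $S=TP\in\mathcal B(Y,X)$. The plan has two steps. First, obtain the lower bound
$$\|T\|_e\le \liminf_n \|T P Q_n|_X\|,$$
where the restriction is to $X$. Second, turn this into a normalized weakly null sequence $(x_n)\subset X$ with $\lim\|Tx_n\|=\|T\|_e$. The reverse inequality $\limsup\|Tx_n\|\le\|T\|_e$ holds for any normalized weakly null sequence: for compact $K$, weak nullness gives $\|Kx_n\|\to0$, so $\limsup\|Tx_n\|\le\|T-K\|$.

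For the first step, write $T=TP|_X = TPP_n|_X + TPQ_n|_X$. The operator $TPP_n|_X$ has finite rank, hence is compact, so $\|T\|_e\le \|TPQ_n|_X\|$ for every $n$. This is only an upper bound on $\|T\|_e$ by the tail norms. What we actually need is to find unit vectors $x\in X$ that are essentially supported far out in $Y$ and satisfy $\|Tx\|\ge\|T\|_e-\varepsilon$.

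For this, fix $\varepsilon>0$ and $n$. Since $\|T - TPP_m|_X\|\ge\|T\|_e$ for every $m$, pick a unit $y\in X$ with $\|TPQ_m y\|>\|T\|_e-\varepsilon$ for suitable $m\ge n$. The vector $Q_my$ has norm at most $1$ and lies in $Y$. The vector $PQ_my\in X$ has norm at most $1$, and $T(PQ_my)=TPQ_my$, so $\|T(PQ_my)\|>\|T\|_e-\varepsilon$. This forces $\|PQ_my\|\ge(\|T\|_e-\varepsilon)/\|T\|$, which is bounded away from $0$ when $\|T\|_e>0$. Normalize $x_n=PQ_{m}y/\|PQ_my\|$. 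Since $\|PQ_my\|\le1$, we get $\|Tx_n\|\ge\|T\|_e-\varepsilon$. (If $\|T\|_e=0$, any normalized weakly null sequence in $X$ works; such a sequence exists whenever $X$ is infinite dimensional, because $X^*$ is separable and $X$ contains no $\ell_1$. The finite-dimensional case is trivial or excluded.)

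The main obstacle is weak nullness. The vectors $Q_my$ with $m\to\infty$ and $\|y\|\le1$ form a bounded sequence, and shrinkingness gives $f(Q_my)=(Q_m^*f)(y)$ with $\|Q_m^*f\|\to0$ for every $f\in Y^*$, so $Q_my\to0$ weakly in $Y$. Since $P$ is weak-to-weak continuous, $PQ_my\to0$ weakly in $X$, and dividing by norms bounded below keeps the sequence weakly null. Taking $\varepsilon=1/n$ and $m_n\to\infty$ gives a normalized weakly null sequence with $\liminf\|Tx_n\|\ge\|T\|_e$. Combined with the reverse inequality, this yields $\lim\|Tx_n\|=\|T\|_e$.

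The one point to verify carefully is the choice of $m$ and $y$ in the second paragraph. We need $\|TPQ_m|_X\|\ge\|T\|_e$, and this follows directly from the compactness of $TPP_m|_X$, since $TPQ_m|_X = T - TPP_m|_X$. So the step is sound.
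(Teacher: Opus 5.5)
Your proof is correct and follows the paper's argument. You use compactness of $TPP_n|_X$ to get $\|TP(I-P_n)|_X\|\ge\|T\|_e$, pick near-norming unit vectors $y_n$, normalize $P(I-P_n)y_n$, use shrinkingness for weak nullness, and get the reverse inequality from $\|Kx_n\|\to 0$ for compact $K$. You also supply two details the paper leaves implicit: the lower bound $\|P(I-P_n)y_n\|\ge(\|T\|_e-\varepsilon)/\|T\|$ that keeps the normalized sequence weakly null, and the estimate $|f(Q_my)|\le\|Q_m^*f\|\to 0$.
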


\begin{proof}
For each $n\in\mathbb{N}$, let $P_n$ be the natural projection from $Y$ onto $[e_i: 1\leq i\leq n]$. Since $(e_n)_n$ is bimonotone, $\|P_n\|=\|I-P_n\|=1$. Let $P$ be a norm one projection from $Y$ onto $X$ and let $T$ be any bounded linear operator in $\mathcal{B}(X)$. If $T$ is compact, then the conclusion is obvious. So we may assume that $T$ is not compact and $\|T\|_e>0$. Fix a sequence of positive real numbers $(\varepsilon_n)$ which converges to $0$. Since $\|T\|_e = \|T-TPP_n|_X)\|_e=\|TP-TPP_n|_X\|_e\leq \|TP(I-P_n)\|$, we can find a normalized sequence $(y_n)_n$ in $X$ such that for each $n\in\mathbb{N}$,
$$
\|T\|_e<\|TP(I-P_n)y_n\|+\epsilon_n.
$$
Let $x_n=P(I-P_n)y_n/\|P(I-P_n)y_n\|$. Then $(x_n)$ is a normalized weakly null sequence in $X$ and
$$
\|T\|_e\leq\limsup  \|Tx_n\|.
$$
On the other hand, for any compact operator $L$ on $X$, we have 
\begin{equation*}
    \limsup\|Tx_n\| = \limsup\|(T-L)x_n\|  \leq \|T-L\|. 
\end{equation*}
By taking infimum on the right hand side, we get
\begin{equation}\label{12}
    \limsup\|Tx_n\| \leq \|T\|_e.
\end{equation}
Combining the results above, we have
\begin{equation*}
 \|T\|_e = \limsup \|Tx_n\|.
\end{equation*}
The conclusion follows by passing to a subsequence of $(x_n)$.
\end{proof}

Let $(e_n)$ be a Schauder basis for a Banach space $X$. We say that $(e_n)$ is unconditional if there exists a constant $C\geq 1$ so that for any sequences $(a_n)$ and $(b_n)$ with $|a_n|\leq |b_n|, \forall n\in\mathbb{N}$,
$$
\Big\|\sum a_n e_n\Big\|\leq C \Big\|\sum b_n e_n\Big\|.
$$
The best constant $C$ is called the unconditional constant for $(e_n)$.

Recall that an M-basis for a separable Banach space $X$ is a biorthogonal system $(x_i, x_i^*)\subset X\times X^*$ such that $[x_i]=X$ and $\overline{[x_i^*]}^*=X^*$. $[x_i]$ denotes the closed linear span of the $x_i$'s and $\overline{[x_i^*]}^*$ is the weak$^*$ closure of the linear span of the $x_i^*$'s. Markushevich \cite{M1} (or see \cite[Proposition 1.f.3]{LT}) proved that every separable Banach space admits an M-basis. If $X^*$ is separable, we can choose an M-basis $(x_i, x_i^*)$ so that $[x_i^*]=X^*$. In such a case, we call $(x_i, x_i^*)$ a shrinking M-basis for $X$. If $(x_i, x_i^*)$ is a shrinking M-basis for $X$, then every normalized block sequence $(y_i)$ of $(x_i)$ converges weakly to $0$. Given an M-basis $(x_i, x_i^*)$ and $1=n_1<n_2<...$, the sequence $(X_i, X_i^*)$ defined by $X_i=[x_j]_{j=n_i}^{n_{i+1}-1}, X_i^*=[x_j^*]_{j=n_i}^{n_{i+1}-1}$ is called a blocking of $(x_i, x_i^*)$. 

\begin{thm}\label{thm1}
    Let $X$ be an infinite dimensional complemented subspace of a Banach space $Y$ with a shrinking unconditional basis $\{e_n\}$. If $X$ has property (E), then the Calkin algebra $\mathcal{B}(X)/\mathcal{K}(X)$ does not have the ball covering property.   
\end{thm}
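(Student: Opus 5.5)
We argue by contradiction. Suppose the unit sphere of $\mathcal{B}(X)/\mathcal{K}(X)$ is covered by countably many balls $B(\pi(T_i), r_i)$ with $r_i<\|\pi(T_i)\| = \|T_i\|_e$, where $\pi$ denotes the quotient map. The aim is to build a single operator $S\in\mathcal{B}(X)$ with $\|S\|_e=1$ and $\|S-T_i\|_e\ge \|T_i\|_e\ge r_i$ for every $i$. Such an $S$ lies on the sphere but in none of the balls, which is the desired contradiction. This is the standard diagonal strategy used for $\ell_\infty/c_0$: construct $S$ as a block-diagonal norm-one operator whose diagonal blocks are signed or idempotent pieces, and choose the signs so that $S$ is ``opposite'' to each $T_i$ on some weakly null sequence.

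\textbf{Setup.} Let $P$ be a bounded projection of $Y$ onto $X$. Since $Y^*$ is separable, we use the shrinking M-basis machinery just recalled. Choose a shrinking M-basis $(x_j,x_j^*)$ of $X$, so that normalized blocks of it are weakly null. For each $i$, property (E) gives a normalized weakly null sequence $(u^i_n)_n$ with $\|T_iu^i_n\|\to\|T_i\|_e$. Using a gliding hump/perturbation argument, together with the facts that $T_i$ maps weakly null sequences to weakly null ones and that $(e_n)$ is shrinking, we pass to subsequences and small perturbations with two properties. First, the vectors $u^i_n$ are finitely supported. Second, both $u^i_n$ and $T_iu^i_n$ are essentially supported (in $Y$) on disjoint intervals of $(e_n)$. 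We then diagonalize over the countably many $i$. We enumerate the pairs $(i,n)$ so that each $i$ appears infinitely often, and select vectors $z_k=u^{i_k}_{n_k}$ so that the whole family $\{z_k,\,T_{i_k}z_k\}$ consists of successive, almost disjointly supported blocks of $(e_n)$, with errors summing to something small.

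\textbf{The operator $S$.} Using the interval projections $Q_k$ of the unconditional basis of $Y$ onto the support block of $z_k$, we need functionals $f_k\in Y^*$ with $\|f_k\|\le 1+\varepsilon$, $f_k(z_k)=1$, and $f_k$ supported on that block. We then define
\[
S y=-\sum_k \theta_k\, f_k(Q_k y)\, P w_k .
\]
Here $w_k$ is chosen to be $T_{i_k}z_k/\|T_{i_k}z_k\|$ placed on the block of $z_k$, or more simply $w_k=z_k$ with a sign or rotation $\theta_k$. Unconditionality of $(e_n)$ makes $S$ bounded, with $\|S\|_e\le C$. After normalizing, $\|S\|_e=1$. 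The intent is that $\|(S-T_{i})z_k\|\ge \|T_iz_k\|+\|Sz_k\|-\text{small}$ holds because $Sz_k$ and $T_iz_k$ are disjointly supported, or are opposite. Since $(z_k)_{i_k=i}$ is weakly null and normalized, Lemma-type reasoning gives $\|S-T_i\|_e\ge\limsup_{k:\,i_k=i}\|(S-T_i)z_k\|$. This requires passing through $\|(S-T_i-K)z_k\|\to\|(S-T_i)z_k\|$ for compact $K$. The result is at least $\|T_i\|_e$, hence greater than $r_i$.

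\textbf{Main obstacle.} The hardest step is the lower bound $\|Sz_k-T_iz_k\|\gtrsim\|T_iz_k\|$. With a non-$1$-unconditional basis, disjointness only gives the estimate up to the unconditional constant. The resulting bound, roughly $\max(\|Sz_k\|,\|T_iz_k\|)/C$, need not exceed $r_i$ when $r_i$ is close to $\|T_i\|_e$. My first attempt would be to exploit the freedom to scale. I would take $Sz_k$ to be a large multiple of a vector disjoint from $T_iz_k$, and only normalize $\|S\|_e=1$ at the end via property (E) applied to $S$ itself. If that fails, I would instead use a ``doubling'' sign trick: choose $Sz_k=-\lambda T_iz_k$ projected into $X$ by $P$, so that $\|(S-T_i)z_k\|=(1+\lambda)\|T_iz_k\|$ up to small error, and realize the necessary norm control through the block structure.
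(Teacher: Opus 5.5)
Your outer frame matches the paper's. You argue by contradiction, use property (E) and a shrinking M-basis of $X$ to merge the norming weakly null sequences of all centers into one normalized block sequence $(z_k)$, and use $\|S-T_i\|_e\ge\limsup_k\|(S-T_i)z_k\|$. The gap is the construction of $S$: you leave it open, and as designed it does not work.

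\begin{itemize}
\item \textbf{Disjoint supports.} You want $S$ to act nontrivially on $z_k$, disjointly from or opposite to $T_iz_k$. With unconditional constant $C>1$, disjointness only gives $\|(S-T_i)z_k\|\ge\max(\|Sz_k\|,\|T_iz_k\|)/C$.
\item \textbf{Rescaling.} This cannot repair the constant. After normalizing $\|S\|_e=1$ you automatically have $\limsup_k\|Sz_k\|\le 1$, because $(z_k)$ is normalized and weakly null.
\item \textbf{The ``doubling'' variant.} Taking $Sz_k\approx-\lambda T_{i_k}z_k$ would give the right inequality, but you exhibit no bounded \emph{non-compact} operator that does it. The numbers $\|T_i\|_e$ may be unbounded, and the normalized images $T_{i_k}z_k/\|T_{i_k}z_k\|$ need not be dominated by $(z_k)$. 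If you make the coefficients summable to force boundedness, $S$ is nuclear, hence compact, and cannot be normalized.
\item \textbf{Boundedness claim.} Unconditionality does not make $y\mapsto\sum_k f_k(Q_ky)w_k$ bounded; only coordinate multipliers are automatically bounded. In $\ell_2\oplus_\infty\ell_4$ with the two unit vector bases interleaved, take $z_k=w_k=e_{2k}+e_{2k+1}$ and $f_k=e_{2k+1}^*$. Then $y=N^{-1/4}\sum_{k\le N}e_{2k+1}$ has norm $1$, while $\|Sy\|=N^{1/4}$.
\end{itemize}

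The missing idea is that nothing needs to happen on the norming sequence. It suffices that $\|Sz_k\|\to0$, because then $\limsup_k\|(S-T_i)z_k\|=\limsup_k\|T_iz_k\|=\|T_i\|_e$, independently of the unconditional constant. The non-compactness of $S$ is produced on a \emph{different} weakly null sequence placed in the gaps.

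The paper does this in three steps.
\begin{itemize}
\item It passes to a lacunary subsequence $(x_j)$ of the diagonal block sequence so that each finite-dimensional space $[f_i]_{i\in I(x_j)}$ is, up to $\epsilon_j$, supported on an interval $[m_j,M_j]$ of the basis $(e_i)$ of $Y$.
\item It chooses M-basis vectors $f_{t_j}$ between $x_j$ and $x_{j+1}$ that are, up to $\epsilon_j$, supported on the gap $(M_j,m_{j+1})$. This is where the shrinking M-basis is used.
\item It sets $S=P\circ\sum_{i\in\bigcup_j(M_j,m_{j+1})}e_i^*\otimes e_i$. This is a coordinate projection followed by $P$, hence bounded by $C\|P\|$.
\end{itemize}
This $S$ satisfies $\|Sx_j\|<C\|P\|\epsilon_j\to0$ and $\|Sf_{t_j}\|\ge 1-O(\epsilon_j)$. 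Since $(f_{t_j})$ is weakly null, $S$ is non-compact. The normalized operator $S/\|S\|_e$ then satisfies $\|T_i-S/\|S\|_e\|_e\ge\limsup_j\|T_ix_j\|=\|T_i\|_e$ for every $i$.
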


\begin{proof}
 Assume that the sequence $(K_n)_n$ is a ball covering of $\mathcal{B}(X)/\mathcal{K}(X)$. We will prove that there exists an operator $T\in \mathcal{B}(X)$ such that, 
\begin{enumerate}
    \item $\|T\|_e=1$ and
    \item $\|K_n-T\|_e \geq\|K_n\|_e$ for all $n$. 
\end{enumerate}
Let $(f_i, f_i^*)$ be a shrinking M-basis for $X$ so that $\|f_i\|=1$ for all $i\in\mathbb{N}$.
Since $X$ has property (E), by a standard perturbation argument and induction, we can select for each $n$ a normalized block sequence $(y_k^n)_k$ of $(f_i)$ such that
\begin{enumerate}
    \item $\|K_n\|_e = \lim_k \|K_ny_k^n\|$,
    \item $I(y_k^n, (f_i))\cap I(y_j^m, (f_i))=\phi$ whenever $n\neq m$.
    \end{enumerate}
By (1) and (2), we can relabel the set $\{y_k^n\}_{k, n}$ as a block sequence $(y_i)$ of $(f_i)$ so that 
$$
\|K_n\|_e = \limsup_m \|K_ny_m\| \mbox{ for all } n.
$$

Let $(\epsilon_i)$ be a sequence of positive real numbers decreasing to $0$. Since both $(e_i)$ and $(f_i)$ are shrinking, again by induction, we are able to find a lacunary subsequence $(x_i)$ of $(y_i)$ so that
\begin{enumerate}
    \item[(i)] $\|K_n\|_e = \limsup_m \|K_nx_m\| \mbox{ for all } n$,
    \item[(ii)] There are increasing sequences of natural numbers $(m_j)$ and $(M_j)$ so that $m_j<M_j<m_{j+1}, \forall j$ and
      $$
     \Big\|(\sum_{i\notin [m_n, M_n]} e_i^*\otimes e_i)|_{[f_i]_{i \in I(x_n, (f_i))}}\Big\|<\epsilon_n\mbox{ for all } n,
    $$
    \item[(iii)] For each $j\in\mathbb{N}$, there is a $t_j\in\mathbb{N}$ so that $M(x_j, (f_i))<t_j<m(x_{j+1}, (f_i))$ and
    $$
     \Big\|(\sum_{i\notin (M_j, m_{j+1})} e_i^*\otimes e_i)(f_{t_j})\Big\|<\epsilon_j.
     $$
        \end{enumerate}
    
Now we define an operator $S$ on $X$ as follows,
$$
S=P\circ(\sum_{i\in \cup_j (M_j, m_{j+1})} e_i^*\otimes e_i),
$$
where $P$ is a bounded linear projection from $Y$ onto $X$.

Since $(e_i)$ is unconditional, $S$ is bounded. Let $C$ be the unconditional constant for $(e_i)$. By (ii),
$$
\|S(x_j)\|<C\|P\|\epsilon_j\mbox{ for all } j.
$$
So we have $\lim_j\|S(x_j)\|=0$. Next we show that $S$ is not compact. Actually by (iii), we have
\begin{align*}
\|S(f_{t_j})\| 
&=\|P\circ(I-(\sum_{i\notin (M_j, m_{j+1})} e_i^*\otimes e_i))(f_{t_j})\| \\
&=\|f_{t_j}-P\circ(\sum_{i\notin (M_j, m_{j+1})} e_i^*\otimes e_i))(f_{t_j})\| \\
&\geq 1-\|P\|\epsilon_j.
\end{align*}
Therefore $\limsup_j\|S(f_{t_j})\|\geq1$. Since $(f_{t_j})_j$ is weakly null, $S$ is not compact and $\|S\|_e>0$.

Let $T=S/\|S\|_e$. Hence, for each $n$ 
\begin{align*}
\left\|K_n-T\right\|_e 
&=\left\|K_n-\frac{S}{\|S\|_e}\right\|_e\geq \limsup_j\left\|K_nx_j-\frac{S}{\|S\|_e}x_j\right\| \\
&= \limsup_j\|K_nx_j\| = \|K_n\|_e.
\end{align*}

This finishes the proof.
\end{proof}

By Lemma \ref{lemm} and Theorem \ref{thm1}, we have the following immediate corollary.
\begin{corollary}
Let $X$ be an $1$-complemented subspace of a Banach space $Y$ with a shrinking $1$-unconditional basis. Then the Calkin algebra $\mathcal{B}(X)/\mathcal{K}(X)$ fails the BCP.
\end{corollary}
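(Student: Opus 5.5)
The corollary follows by checking the two hypotheses of Theorem \ref{thm1}: that $X$ is an infinite dimensional complemented subspace of a space with a shrinking unconditional basis, and that $X$ has property (E). The first is immediate. $X$ is $1$-complemented in $Y$, and $Y$ has a shrinking $1$-unconditional basis $(e_n)$, which is in particular a shrinking unconditional basis. If $X$ is finite dimensional, then $\mathcal{B}(X)=\mathcal{K}(X)$ and the Calkin algebra is $\{0\}$. Its unit sphere is empty, so this degenerate case is excluded or handled by convention, and we assume $\dim X=\infty$ as in Theorem \ref{thm1}.

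For property (E) we use Lemma \ref{lemm}, which requires the basis of $Y$ to be bimonotone and shrinking. A $1$-unconditional basis is bimonotone. Indeed, for any finite or cofinite set $A\subset\mathbb{N}$ the coordinate projection $\sum_{i\in A}e_i^*\otimes e_i$ multiplies coefficients by $0$ or $1$. By $1$-unconditionality (take $|a_n|\le|b_n|$ with $a_n=1_A(n)b_n$) it therefore has norm at most $1$. Applying this to $A=\{1,\dots,n\}$ and to its complement gives $\|P_n\|\le1$ and $\|I-P_n\|\le1$, so the basis is bimonotone. Shrinkingness is assumed, so Lemma \ref{lemm} applies and $X$ has property (E).

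Theorem \ref{thm1} then yields that $\mathcal{B}(X)/\mathcal{K}(X)$ fails the BCP. The only step needing any care is the routine observation that $1$-unconditional implies bimonotone; there is no real obstacle.
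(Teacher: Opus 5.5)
Your proposal is correct and follows the paper's route exactly: the paper obtains the corollary directly from Lemma \ref{lemm}, which gives property (E), combined with Theorem \ref{thm1}. Your check that a $1$-unconditional basis is bimonotone, and your note that $X$ must tacitly be infinite dimensional (otherwise the Calkin algebra is $\{0\}$ and has the BCP vacuously), fill in details the paper leaves implicit.
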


When we do not have shrinkingness for the unconditional basis, situation is different. 
Next we will show the general case. That is, if $X$ has an $1$-unconditional basis, then the Calkin algebra $\mathcal{B}(X)/\mathcal{K}(X)$ does not have the BCP. The proof of the lemma below is standard which will be omitted.

\begin{lem}\label{lem2}
Let $X$ be a Banach space with a Schauder basis $(e_n)$ and $(P_n)$ be the natural basis projection from $X$ onto $[e_i]_{i=1}^n$. Let $K$ be any compact operator in $\mathcal{B}(X)$. Then
$$
\lim_n \|(I-P_n)K\|=0.
$$
\end{lem}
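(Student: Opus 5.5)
The plan is to show that $\|(I-P_n)K\|$ is small for large $n$ by approximating the relatively compact set $K(B_X)$ by a finite net and using that $P_n y\to y$ for every fixed $y\in X$. The only inputs are the uniform bound $C=\sup_n\|P_n\|<\infty$, the basis constant of $(e_n)$, and the pointwise convergence $P_n y \to y$ for each $y\in X$. Both are standard facts about a Schauder basis, so $\|I-P_n\|\le 1+C$ for all $n$.

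Fix $\varepsilon>0$. Since $K$ is compact, $K(B_X)$ is totally bounded, so I choose finitely many $y_1,\dots,y_N\in X$ such that every $Kx$ with $\|x\|\le 1$ satisfies $\|Kx-y_j\|<\varepsilon$ for some $j$. Because there are only finitely many $y_j$, pointwise convergence gives an $n_0$ such that $\|(I-P_n)y_j\|<\varepsilon$ for all $j\le N$ and all $n\ge n_0$.

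Now take $x\in B_X$ and pick $j$ with $\|Kx-y_j\|<\varepsilon$. For $n\ge n_0$,
$$
\|(I-P_n)Kx\|\le \|I-P_n\|\,\|Kx-y_j\|+\|(I-P_n)y_j\| < (1+C)\varepsilon+\varepsilon .
$$
Taking the supremum over $x\in B_X$ gives $\|(I-P_n)K\|\le (2+C)\varepsilon$ for $n\ge n_0$. Since $\varepsilon$ was arbitrary, $\lim_n\|(I-P_n)K\|=0$.

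The only step with any content is upgrading pointwise convergence to uniform convergence on a compact set, and the uniform bound $\sup_n\|I-P_n\|<\infty$ from the uniform boundedness of the basis projections is exactly what makes that upgrade work. Beyond this I expect no real obstacle. Note that the argument does not need unconditionality or shrinkingness of the basis.
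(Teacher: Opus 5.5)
Your proof is correct. The paper omits the proof of this lemma as standard, and your argument (a finite $\varepsilon$-net for the totally bounded set $K(B_X)$, pointwise convergence $P_n y\to y$, and the uniform bound $\sup_n\|I-P_n\|\le 1+C$) is precisely that standard argument.
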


\begin{lem}\label{lem3}
Let $X$ be a Banach space with a bimonotone basis $(e_n)$ and $(P_n)$ be the natural basis projection from $X$ onto $[e_i]_{i=1}^n$. Then for every operator $T\in\mathcal{B}(X)$, there exist a normalized block sequence $(x_n)$ of $(e_n)$ and an increasing sequence of natural numbers $(k_n)$ so that
$$
\|T\|_e=\lim_n\|(I-P_{k_n})T x_n\|.
$$
\end{lem}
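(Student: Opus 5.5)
The plan is to prove the two inequalities $\limsup_n\|(I-P_{k_n})Tx_n\|\le\|T\|_e$ and $\liminf_n\|(I-P_{k_n})Tx_n\|\ge\|T\|_e$ separately. The upper bound will hold for \emph{any} normalized sequence $(x_n)$ and any $k_n\to\infty$. The lower bound will come from an inductive construction of the block sequence. Throughout I use that bimonotonicity gives $\|P_n\|=\|I-P_n\|=1$ for all $n$.

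\emph{Upper bound.} Let $K\in\mathcal{K}(X)$ be arbitrary and let $\|x\|=1$. Since $\|I-P_k\|=1$,
\begin{equation*}
\|(I-P_k)Tx\|\le \|(I-P_k)(T-K)x\|+\|(I-P_k)Kx\|\le \|T-K\|+\|(I-P_k)K\|.
\end{equation*}
By Lemma \ref{lem2}, $\|(I-P_k)K\|\to 0$ as $k\to\infty$. Hence whenever $k_n\to\infty$ we get $\limsup_n\|(I-P_{k_n})Tx_n\|\le\|T-K\|$. Taking the infimum over $K$ gives the bound $\|T\|_e$. No weak nullness of $(x_n)$ is needed here.

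\emph{Lower bound.} The key observation is that for all $k,m\in\mathbb{N}$,
\begin{equation*}
(I-P_k)T(I-P_m)=T-\big(P_kT+TP_m-P_kTP_m\big),
\end{equation*}
and the bracketed operator has finite rank. Therefore $\|(I-P_k)T(I-P_m)\|\ge\|T\|_e$ for every $k,m$.

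Now construct the sequences inductively. Suppose $x_1,\dots,x_{n-1}$ have been chosen, and let $q$ be the maximum of $\operatorname{supp}x_{n-1}$ (with $q=0$ at the start). Put $k_n=\max(n,k_{n-1}+1)$. Pick $y$ with $\|y\|=1$ and
\begin{equation*}
\|(I-P_{k_n})T(I-P_q)y\|>\|T\|_e-1/n .
\end{equation*}
Set $z=(I-P_q)y/\|(I-P_q)y\|$. Since $\|(I-P_q)y\|\le 1$, normalizing only increases the quantity, so $\|(I-P_{k_n})Tz\|>\|T\|_e-1/n$. The vector $z$ is supported after $q$, but it need not be finitely supported.

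\emph{Truncation step.} This is the only mildly delicate point. Because $(e_n)$ is a basis, $P_Nz\to z$ as $N\to\infty$. Choose $N$ so large that $\|z-P_Nz\|$ is small compared with $1/(n\|T\|)$, and set $x_n=P_Nz/\|P_Nz\|$. Then $x_n$ is a normalized block supported in $(q,N]$. By continuity, and since $\|(I-P_{k_n})T\|\le\|T\|$, we still have $\|(I-P_{k_n})Tx_n\|>\|T\|_e-2/n$.

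The resulting $(x_n)$ is a normalized block sequence of $(e_n)$ and $(k_n)$ is increasing. It satisfies $\liminf_n\|(I-P_{k_n})Tx_n\|\ge\|T\|_e$. Combined with the upper bound, this gives $\|T\|_e=\lim_n\|(I-P_{k_n})Tx_n\|$.

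I expect the main step to be the finite-rank identity giving the lower bound. Everything else is routine perturbation using bimonotonicity and Lemma \ref{lem2}.
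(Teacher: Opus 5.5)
Your proof is correct. The upper bound is essentially the paper's, but your lower bound takes a genuinely different and more direct route.

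\textbf{The paper's route.} It first shows $\|(I-P_{t_n})T\|\to\|T\|_e$, using compact approximants $S_n$ and Lemma \ref{lem2}. It then picks finitely supported almost-norming vectors $y_n$ for $(I-P_{t_n})T$ and runs a gliding-hump selection. Concretely, it passes to a subsequence $(r_n)$ with $\|(I-P_{t_{r_{n+1}}})TP_{M(y_{r_n})}\|<1/n$, which is Lemma \ref{lem2} applied again, this time to the finite-rank operator $TP_{M(y_{r_n})}$. This lets it discard the head of $y_{r_{n+1}}$ at a cost of $1/n$, and it sets $k_n=t_{r_n}$.

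\textbf{Your route.} You replace this whole selection by the identity $(I-P_k)T(I-P_q)=T-F$, where $F=P_kT+TP_q-P_kTP_q$ has finite rank. This gives $\|(I-P_k)T(I-P_q)\|\ge\|T\|_e$ outright. So an almost-norming vector for the two-sided truncated operator is already a tail vector, and you need neither a subsequence nor a second use of Lemma \ref{lem2}.

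\textbf{What each buys.} With your argument, $(k_n)$ may be any increasing sequence fixed in advance. In the paper, $k_n=t_{r_n}$ is tied to the approximants. Moreover, both the monotonicity of $(k_n)$ and the hump step tacitly require $t_n\to\infty$, for example by choosing the $t_n$ increasing. That choice is permissible, since $\|(I-P_t)S_n\|$ is nonincreasing in $t$ for a bimonotone basis. Both proofs use bimonotonicity at the same points: $\|I-P_k\|=1$, and the normalization after projecting.

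\textbf{One trivial patch.} When $\|T\|_e<1/n$, your required inequality for $y$ is vacuous and could be met by some $y$ with $(I-P_q)y=0$. So choose $y$ with $(I-P_q)y\neq 0$ explicitly; for $\|T\|_e\ge 1/n$ this is automatic. The paper's normalization of $(I-P_{M(y_{r_n})})y_{r_{n+1}}$ carries the same tacit nondegeneracy assumption.
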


\begin{proof}
By the definition of the essential norm, for any $n\in\mathbb{N}$, there exists a compact operator $S_n$ so that
$$
\|T\|_e>\|T-S_n\|-1/n.
$$
By Lemma \ref{lem2}, there exists a $t_n\in\mathbb{N}$ such that
$$
\|(I-P_{t_n})S_n\|<1/n.
$$
So we have
$$
\|T\|_e>\|T-S_n\|-1/n\geq \|(I-P_{t_n})(T-S_n)\|-1/n\geq \|(I-P_{t_n})T\|-2/n.
$$
On the other hand,
$$
\|T\|_e=\|(I-P_{t_n})T\|_e\leq \|(I-P_{t_n})T\|.
$$
Therefore, we get
$$
\|(I-P_{t_n})T\|-2/n\leq\|T\|_e\leq \|(I-P_{t_n})T\|.
$$
Then we can choose a finitely supported norm one vector $y_n$ in $X$ so that 
$$
\|(I-P_{t_n})Ty_n\|>\|(I-P_{t_n})T\|-1/n
$$ and
$$
\|(I-P_{t_n})Ty_n\|-2/n\leq\|T\|_e\leq \|(I-P_{t_n})Ty_n\|+1/n.
$$
By taking limit over $n$, we have
$$
\|T\|_e= \lim_n\|(I-P_{t_n})Ty_n\|= \lim_n\|(I-P_{t_n})T\|.
$$
Now we will show that a subsequence of $(y_n)$ is a small perturbation of a block basis.

Let $r_1=1$. Since $y_1$ is finitely supported, $M(y_1)$ is finite. By Lemma \ref{lem2},
there is $r_2>r_1$ so that
$$
\|(I-P_{t_{r_2}})TP_{M(y_1)}\|<1/2.
$$
In general, since $y_{r_n}$ is finitely supported, we can find $r_{n+1}>r_n$ so that
$$
\|(I-P_{t_{r_{n+1}}})TP_{M(y_{r_n})}\|<1/n.
$$
So we have
\begin{align*}
\|(I-P_{t_{r_{n+1}}})Ty_{r_{n+1}}\|
&=\|(I-P_{t_{r_{n+1}}})TP_{M(y_{r_n})}y_{r_{n+1}}+(I-P_{t_{r_{n+1}}})T(I-P_{M(y_{r_n})})y_{r_{n+1}}\|\\
&<\|(I-P_{t_{r_{n+1}}})T(I-P_{M(y_{r_n})})y_{r_{n+1}}\|+1/n\\
\end{align*}
Let $x_1=y_1, x_{n+1}=(I-P_{M(y_{r_n})})y_{r_{n+1}}/\|(I-P_{M(y_{r_n})})y_{r_{n+1}}\|$ and $k_n=t_{r_n}$. Then $(x_n)$
is a normalized block basis and 
$$
     \|(I-P_{t_{r_{n+1}}})Ty_{r_{n+1}}\|-1/n < \|(I-P_{k_{n+1}})Tx_{n+1}\|\leq \|(I-P_{t_{r_{n+1}}})T\|.
$$
Hence
$$
\|T\|_e= \lim_n\|(I-P_{k_n})Tx_n\|= \lim_n\|(I-P_{k_n})T\|.
$$
\end{proof}

\begin{thm}\label{thm2}
Let $X$ be a Banach space with an $1$-unconditional basis. Then the Calkin algebra $\mathcal{B}(X)/\mathcal{K}(X)$ does not have the BCP.
\end{thm}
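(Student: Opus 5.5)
Suppose $(K_n)_n$ is a ball covering of the unit sphere of $\mathcal{B}(X)/\mathcal{K}(X)$, with the balls $B(K_n, r_n)$ satisfying $r_n\le \|K_n\|_e$. As in Theorem \ref{thm1}, I will construct $T\in\mathcal{B}(X)$ with $\|T\|_e=1$ and $\|K_n-T\|_e\ge \|K_n\|_e$ for every $n$. Then $T$ lies in none of the open balls, which is a contradiction. Since the basis $(e_i)$ is $1$-unconditional it is bimonotone, so Lemma \ref{lem3} applies. For each $n$ it gives a normalized block sequence $(x^n_k)_k$ and indices $k^n_k\to\infty$ with $\|K_n\|_e=\lim_k\|(I-P_{k^n_k})K_nx^n_k\|$. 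Each block $x^n_k$ and its image $K_nx^n_k$ can be truncated to finite intervals with small error: the first is finitely supported, and the second lives, up to $\varepsilon$, on an interval $[k^n_k+1, N]$. I will record these as pairs of intervals $(I(x^n_k), J^n_k)$ with $\|Q_{J^n_k}K_nx^n_k\|\approx \|K_n\|_e$, where $Q_J=\sum_{i\in J}e_i^*\otimes e_i$ and $\|Q_J\|=1$.

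Next, a diagonal induction over pairs $(n,k)$ in an enumeration hitting each $n$ infinitely often selects a sequence of such pairs $(u_j, J_j)$, belonging to operators $K_{n_j}$. Lemma \ref{lem2} makes the selection possible: the tail $(I-P_m)K$ is small for any finite-rank piece, so I can push later choices beyond everything chosen before. I want three properties.
\begin{enumerate}
\item[(a)] The intervals $I(u_j)$ and $J_j$ all lie to the right of $\max(I(u_{j-1})\cup J_{j-1})$.
\item[(b)] $\|Q_{J_j}K_{n_j}u_j\|>\|K_{n_j}\|_e-\varepsilon_j$.
\item[(c)] Between consecutive chosen blocks there are spare basis vectors $e_{s_j}$ not used by any $I(u_i)$ or $J_i$.
\end{enumerate}
Let $A=\bigcup_j \{s_j\}$ be a set of coordinates disjoint from all $J_i$ and all $I(u_i)$. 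Define $S=Q_A$, the diagonal projection onto $[e_s]_{s\in A}$. By $1$-unconditionality $\|S\|\le1$, and $\|Se_{s_j}\|=1$. Since $(e_{s_j})$ is a basis subsequence, $S$ restricted to $[e_{s_j}]$ is the identity on an infinite-dimensional space, so $S$ is not compact and $\|S\|_e=1$. Indeed $\|S\|_e\le\|S\|\le1$, and a nonzero idempotent is never compact on an infinite-dimensional range; more precisely, $\|Q_A\|_e\ge1$ for the infinite diagonal projection. Put $T=S$.

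For the key estimate, fix $n$ and take a compact $L$. For the infinitely many $j$ with $n_j=n$, I will test on $u_j$:
$$\|K_n-T-L\|\ge \|Q_{J_j}(K_n-T-L)u_j\|\ge \|Q_{J_j}K_nu_j\|-\|Q_{J_j}Tu_j\|-\|Q_{J_j}Lu_j\|.$$
Here $Tu_j=0$ because $A$ is disjoint from $I(u_j)$. Also $\|Q_{J_j}Lu_j\|\le\|(I-P_{\min J_j-1})L\|\to0$ by Lemma \ref{lem2}, since $\min J_j\to\infty$. Letting $j\to\infty$ gives $\|K_n-T-L\|\ge\|K_n\|_e$, and taking the infimum over $L$ gives $\|K_n-T\|_e\ge\|K_n\|_e$.

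I expect the main obstacle to be the first step. Without shrinkingness a block sequence need not be weakly null, and $K_nx$ need not be localized, so I cannot argue as in Theorem \ref{thm1} by evaluating $\|Tx_j\|\to0$ directly. This is why Lemma \ref{lem3} is used: it localizes the output of $K_n$ into tails, and the compact part is killed by the output projection $Q_{J_j}$ rather than by weak nullity. The delicate part is the inductive bookkeeping in Lemma \ref{lem3}-style perturbations. I must ensure that $K_nu_j$ is captured, up to $\varepsilon_j$, on a finite interval $J_j$ placed beyond all previous choices. I also need the spare coordinates $s_j$ to avoid both the input supports and the output windows, so that both $Tu_j=0$ and the lower bound survive.
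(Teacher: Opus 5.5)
Your proposal is correct and follows essentially the same route as the paper. You apply Lemma \ref{lem3} to each $K_n$, interlace the resulting blocks with spare coordinates, and take $T$ to be the diagonal projection onto those coordinates so that $T$ vanishes on the chosen blocks; the compact part is then killed by a norm-one output projection via Lemma \ref{lem2}. Truncating the outputs to finite windows $J_j$ and keeping the spare coordinates away from them is harmless but unnecessary: the paper uses $I-P_{k_m}$ directly, and $Tx_m=0$ already removes $T$ from the estimate.
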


\begin{proof}
We assume that there is a ball covering $(K_n)_n$ of $\mathcal{B}(X)/\mathcal{K}(X)$. To reach a contradiction, we will construct an operator $T\in \mathcal{B}(X)$ such that
\begin{enumerate}
    \item $\|T\|_e=1$ and
    \item $\|K_n-T\|_e \geq\|K_n\|_e$ for all $n$. 
\end{enumerate}
For each $K_n$, by Lemma \ref{lem3}, there exist a normalized block sequence $(y_i^n)$ of $(e_n)$ and an increasing sequence of natural numbers $(k_i^n)$ so that
$$
\|K_n\|_e=\lim_i\|(I-P_{k_i^n})K_n y_i^n\|.
$$
Inductively, we can select a block sequence $(x_i)$ such that
\begin{enumerate}
    \item[(i)] $(x_i)$ contains a subsequence of $(y_i^n)$ for each $n$ and hence 
    $$
    \|K_n\|_e \leq \limsup_m \|(I-P_{k_m})K_nx_m\| \mbox{ for all } n,
    $$
    \item[(ii)] For each $i$,
      $$
     M(x_i)+1<m(x_{i+1}).
    $$
            \end{enumerate}
Let $S\in\mathcal{B}(X)$ be defined as
$$
S=\sum_i e_{M(x_i)+1}^*\otimes e_{M(x_i)+1}.
$$
Since $(e_i)$ is unconditional, $S$ is bounded. Since $S(e_{M(x_i)+1})=e_{M(x_i)+1}$ for each $i$, $S$ is not compact. Let $T=S/\|S\|_e$. Then $\|T\|_e=1$ and $T(x_i)=0$ for each $i$. We will show $\|K_n-T\|_e\geq \|K_n\|_e$ for each $n$. For any $\epsilon>0$ and $n\in\mathbb{N}$, there exists a compact operator $K_\epsilon$ so that 
$$\|K_n-T\|_e>\|K_n-T- K_\epsilon\|-\epsilon.
$$
Then by Lemma \ref{lem2}, there exists an $N_\epsilon\in\mathbb{N}$ such that if $m>N_\epsilon$, then
$$
\|(I-P_m)K_\epsilon\|<\epsilon.
$$
So we have for any $m>N_\epsilon$,
\begin{align*}
\|K_n-T\|_e
&>\|K_n-T- K_\epsilon\|-\epsilon\\
&\geq\|(I-P_{k_m})(K_n-T -K_\epsilon)\|-\epsilon\\
&\geq\|(I-P_{k_m})(K_n-T)\|-2\epsilon\\
&\geq\|(I-P_{k_m})(K_n-T)x_m\|-2\epsilon\\
&\geq\|(I-P_{k_m})K_nx_m\|-2\epsilon.
\end{align*}
Taking limit over $m$, we get
$$
\|K_n-T\|_e\geq\limsup_m\|(I-P_{k_m})K_nx_m\|-2\epsilon=\|K_n\|_e-2\epsilon.
$$
Since $\epsilon$ is arbitrary, we have
$$
\|K_n-T\|_e\geq\|K_n\|_e.
$$
This concludes the proof.
\end{proof}

\begin{remark}
Note that the theorem above also holds if $X$  has only an unconditional finite dimensional decomposition (FDD). The unconditionality assumption in the theorem cannot be removed without adding other assumptions. Indeed, the space $\mathfrak{X}_{AH}$, introduced by Argyros and Haydon \cite{AH}, has a shrinking Schauder basis but its Calkin algebra is one-dimensional, which of course has the BCP.
\end{remark}

However, we do not know any example of non-separable Banach space $X$ whose Calkin algebra $\mathcal{B}(X)/\mathcal{K}(X)$ has the BCP. Here is an interesting open problem:

\begin{qn}
Does there exist a non-separable Banach space $X$ whose Calkin algebra $\mathcal{B}(X)/\mathcal{K}(X)$ has the BCP?
\end{qn}

\section{BCP for $\mathcal{B}(X)$}
As mentioned in the introduction, it is known that $\mathcal{B}(c_0)$ and $\mathcal{B}(\ell_p)$ $(1\leq p<\infty)$ has the BCP. The spaces $c_0$ and $\ell_p$ $(1\leq p<\infty)$ have $1$-unconditional bases. In this section, we will prove that $\mathcal{B}(X)$ has the BCP if $X$ has a shrinking unconditional basis with unconditional constant less than $2$. Actually, we have the following more general result.

\begin{thm}
    Let $X$ and $Y$ be Banach spaces such that $X^*$ is separable and $Y$ has an unconditional basis with unconditional constant less than $2$.  Then any subspace $E$ of $\mathcal{B}(X, Y)$ containing $\mathcal{F}(X, Y)$ has the UBCP. 
\end{thm}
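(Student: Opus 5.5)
We will write down an explicit countable family of balls and check that it covers the unit sphere of $E$ uniformly. Let $C<2$ be the unconditional constant of the basis $(e_n)$ of $Y$, with biorthogonal functionals $(e_n^*)$. Let $(x_k^*)$ be a dense sequence in the unit sphere of $X^*$; this is possible because $X^*$ is separable. The candidate centers are rank-one operators of the form
$$
A_{k,n,\sigma}=\lambda\, x_k^*\otimes \sigma e_n ,\qquad \sigma\in\{\pm1\},\ k,n\in\mathbb{N},
$$
where $\lambda>0$ is a fixed scalar to be chosen depending only on $C$. We would take $\lambda$ large and the common radius $r=\lambda-\delta$, with $\delta>0$ depending on $C$. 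These are finite rank operators, so they belong to $E$. The radii are bounded by $R=\lambda$, and $\|A_{k,n,\sigma}\|-r=\delta$ (after normalizing $\|e_n\|$, or absorbing $\|e_n\|$ into the constants). So if these balls cover the unit sphere of $E$, the UBCP follows.

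The key step is the following. Given $T\in E$ with $\|T\|=1$, we must find $k,n,\sigma$ with $\|T-A_{k,n,\sigma}\|<\lambda-\delta$.

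\emph{Choice of $n$.} Pick $x\in X$ with $\|x\|=1$ and $\|Tx\|$ close to $1$, and write $Tx=\sum_i a_ie_i$. Since the basis is unconditional, the coordinates are bounded, so we choose $n$ with $|e_n^*(Tx)|$ bounded below. Here $e_n^*(Tx)=(T^*e_n^*)(x)$, and we set $x_k^*$ to approximate $T^*e_n^*/\|T^*e_n^*\|$, with $\sigma$ chosen to be the sign.

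\emph{The norm estimate.} Now estimate $\|(T-A)y\|$ for $\|y\|\le1$ by splitting
$$
T-A=(I-e_n^*\otimes e_n)T+e_n^*\otimes e_n\,T-A .
$$
The first term has norm at most $C$, because $I-e_n^*\otimes e_n$ is a sign change combined with averaging, so its norm is bounded by about $(1+C)/2$. More precisely, $I-2e_n^*\otimes e_n$ has norm at most $C$, and
$$
I-e_n^*\otimes e_n=\tfrac12\bigl(I+(I-2e_n^*\otimes e_n)\bigr),
$$
so its norm is at most $(1+C)/2<3/2$. The second part is rank one, in the direction $e_n$, with coefficient $(T^*e_n^*-\lambda\sigma x_k^*)(y)$. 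Because $x_k^*$ is aligned with $T^*e_n^*$, for $\lambda$ large this coefficient has absolute value at most $\lambda-|T^*e_n^*|$ plus a small error. Hence
$$
\|(T-A)y\|\le \tfrac{1+C}{2}+\lambda-\|T^*e_n^*\|+\text{small},
$$
and this is less than $\lambda-\delta$ provided $\|T^*e_n^*\|>\tfrac{1+C}{2}+\delta$.

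\emph{Main obstacle.} The difficulty is that $\|T^*e_n^*\|$ need not be that large; a single coordinate can be small even though $\|T\|=1$. The fix we would try first is to replace the single index $n$ by a finite set $F\subset\mathbb{N}$ and a sign vector, and to use centers
$$
\lambda\, x_k^*\otimes u ,
$$
where $u$ ranges over a countable dense set of finitely supported vectors with rational coefficients. We then split $T-A=(I-P_F)T+(P_FT-A)$, where $P_F$ is the coordinate projection onto $F$. We choose $F$ and $u$ with $u$ approximating $Tx/\|Tx\|$, so that $\|P_FTx\|\approx 1$. Then $\|I-P_F\|\le(1+C)/2$ gives the same bound with $\|T^*e_n^*\|$ replaced by a quantity close to $1$. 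The inequality $(1+C)/2<1$ fails, however, so this simple subtraction is not enough.

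The real argument must therefore exploit the fact that, along the direction $x$, the center $\lambda x_k^*\otimes u$ with $\lambda$ huge has
$$
\|\lambda x_k^*(y)u-Ty\|\approx \lambda|x_k^*(y)|\pm\text{stuff}.
$$
The standard trick from \cite{LLLZ} is to estimate $\|\lambda u\, x_k^*(y)-Ty\|$ separately according to whether $|x_k^*(y)|$ is small or large. When it is small, the bound is $\lambda|x_k^*(y)|+1<\lambda-\delta$. When it is large, $y$ is close to a multiple of $x$, and we use
$$
\|\lambda c u-Ty\|\le |\lambda c-\langle\cdot\rangle|+\|Ty-\text{component}\|.
$$
Here the unconditional constant $C<2$ enters, because it guarantees that reflecting the component of $Ty$ along $u$ costs at most $C$, and $C<2$ yields the strict gap $\delta=(2-C)/4$. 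Making this case split quantitative, uniformly in $T$, is the step we expect to be hardest.
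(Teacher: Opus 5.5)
Your proposal stops exactly where the proof has to happen, and the route you sketch past the ``main obstacle'' does not work in this generality. The step ``when $|x_k^*(y)|$ is large, $y$ is close to a multiple of $x$'' is a uniform-convexity-type property of $X$, available for instance in $\ell_p$, $1<p<\infty$, but not under the present hypotheses. $X=c_0$ has separable dual, yet every slice $\{y\in B_{c_0}: x^*(y)>1-\eta\}$ has diameter $2$: it contains $y_0\pm e_{N+1}$ for a suitable finitely supported $y_0$ and large $N$. Likewise, unconditionality does not let you ``reflect the component of $Ty$ along $u$'' for a general finitely supported $u$. The constant $C$ controls only coordinate sign changes $\sum_i\epsilon_ie_i^*\otimes e_i$, for instance $\|I-2P_F\|\le C$. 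So, as written, nothing shows that a rank-one ball with a uniform gap covers an arbitrary $T$ in the sphere, and it is not clear how $C<2$ would enter such an argument.

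The missing idea is to drop rank-one centers and use the sign change $I-2P_F$ with coefficient $2$. Averaging it into $I-P_F$ is what cost you the factor $(1+C)/2$. Given $\|T\|=1$, choose $F=\{1,\dots,N\}$ with $\|P_FT\|>1-\eta$; this is possible because $P_NT\to T$ strongly. Take the finite-rank center $A=2\sum_{i\in F}x^*_{m_i}\otimes e_i$, where the $x^*_{m_i}$ come from a fixed countable dense subset of $X^*$ and satisfy $\sum_{i\in F}\|e_i^*T-x^*_{m_i}\|<\epsilon$. This is where separability of $X^*$ is used, and all such $A$ form a countable subset of $\mathcal{F}(X,Y)\subseteq E$. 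With $(e_i)$ normalized,
\[
\|T-A\|\le\|(I-2P_F)T\|+2\epsilon\le C+2\epsilon,\qquad \|A\|\ge 2\|P_FT\|-2\epsilon> 2-2\eta-2\epsilon .
\]
Now take radius $\|A\|-(2-C)/2$ and keep only centers with $1\le\|A\|\le 2C+1$, which are the only ones needed. This gives the UBCP once $2\eta+4\epsilon<(2-C)/2$. This is the paper's argument: its Case 2 is exactly this estimate. Its Case 1 uses centers approximating $cP_FT$ with $1\le c\le 2$, via $(I-cP_F)T=(c-1)(I-2P_F)T+(2-c)(I-P_F)T$. In fact the Case 2 estimate alone already handles every $T$ in the unit sphere.
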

\begin{proof}
It is sufficient to show that the UBCP points of $\mathcal{B}\left(X, Y\right)$ can be taken in $\mathcal{F}\left(X, Y \right)$. Let $\left(e_n\right)$ be a normalized $\gamma$-unconditional basis of $Y$ with $1\leq\gamma< 2$  and $\left(e_n^*\right)$ be the corresponding biorthogonal functionals in $Y^*$. Since $X^*$ is separable, there exists a countable dense subset $\left(x_n^*\right)$ in $B(0, \gamma)$ of $X^*$.
  We will show that the countable set $\mathcal{A}$, given by
$$
\mathcal{A}=\left\{\lambda\sum_{i\in F}x_{m_i}^* \otimes e_i : \lambda\in \mathbb{Q}\cap[0, 2\gamma], F\subset \mathbb{N} \mbox{ finite}, m_i\in\mathbb{N} \right\}
$$
is a set of UBCP points for $\mathcal{B}\left(X, Y\right)$.

Let $T=\sum_{n=1}^{\infty} e_n^* T \otimes e_n \in S_{\mathcal{B}\left(X, Y\right)}$, where the series converges in the strong operator topology. Let $$\beta_1 = \left\{\sup_F\left\|\sum_{i\in F} {e_i^* T}\otimes e_i\right\|: F\subset \mathbb{N} \mbox{ finite}\right\},$$
and
$$\beta = \left\{\sup_F\left\|\sum_{i\in F} {\epsilon_ie_i^* T}\otimes e_i\right\|: F\subset \mathbb{N} \mbox{ finite}, \epsilon_i=\pm1\right\}.$$

Since $\|T\|=1$ and $(e_i)$ is $\gamma$-unconditional, by the definition of $\beta_1, \beta$ and triangle inequality, it is easy to see that
 $$
 \beta\leq\gamma<2\mbox{ and }1\leq \beta_1\leq \beta\leq 2\beta_1.
 $$ 

\textbf{Case 1:} Assume $\beta <\frac{7}{4}\beta_1$.

We can choose a $\lambda\in \mathbb{Q}$ and $\frac{1}{4}>\delta>0$ such that $2\beta_1>\lambda>\frac{\beta}{1-2\delta}+\frac{1}{8}$, which is possible since $\frac{1}{8}+\beta<\frac{1}{4}+\frac{7}{4}\beta_1\leq \frac{\beta_1}{4}+\frac{7}{4}\beta_1 =2\beta_1$. 

Pick a finite subset $F\subset \mathbb{N}$ such that
$$
{\lambda}<2{\theta},
$$
where  $\theta=\left\|\sum_{i\in F} {e_i^* T}\otimes e_i\right\|\leq\beta_1$. Since $\left\|\frac{{e_i^* T}}{\theta}\right\| \leq \gamma$ for all $i\in F$,  we can find $m_i \in \mathbb{N}$ such that
$$
\sum_{i\in F} \left\|\left(x_{m_i}^*-\frac{{e_i^* T}}{\theta}\right)\right\|<{\delta}.
$$
Then
\begin{align*}
    \left\|\lambda\sum_{i\in F} x_{m_i}^*\otimes e_i\right\| & = \left\|\lambda\sum_{i\in F} \left(x_{m_i}^*\otimes e_i - \frac{{e_i^* T}}{\theta} \otimes e_i+ \frac{{e_i^* T}}{\theta}\otimes e_i\right) \right\|\\
    &\geq \lambda\left\|\sum_{i\in F}\frac{{ e_i^* T}}{\theta}\otimes e_i\right\|-\lambda\left\|\sum_{i\in F} \left(x_{m_i}^*\otimes e_i - \frac{{e_i^* T}}{\theta}\otimes e_i\right)\right\|\\
   & \geq \lambda(1-{\delta}).
\end{align*}

Also, since $|\frac{\lambda}{\theta}-1| = \frac{\lambda}{\theta}-1\leq1$, we get,
\begin{align*}
   \left \|T-\sum_{i\in F}\lambda x_{m_i}^* \otimes e_i\right\|
   &= \left\| \sum_{i=1}^{\infty} e_i^* T \otimes e_i -  \sum_{i\in F}\lambda x_{m_i}^* \otimes e_i\right\| \\
   & \leq  \left\|\left(1-\frac{\lambda}{\theta}\right) \sum_{i\in F} e_i^* T \otimes e_i+\sum_{i\in F^c} e_i^* T \otimes e_i\right\|
   +\sum_{i\in F}\left\|\lambda\left(\frac{e_i^* T}{\theta}-x_{m_i}^*\right) \otimes e_i\right\| \\
   & \leq \left\| \sum_{i\in F}\left( 1-\frac{\lambda}{\theta}\right)e_i^* T\otimes e_i+\sum_{i\in F^c} e_i^* T\otimes e_i\right\|+{\lambda\delta}\\
   & \leq \left(\frac{\lambda}{\theta}-1\right) \left\| -\sum_{i\in F}e_i^* T\otimes e_i+\sum_{i\in F^c} e_i^* T\otimes e_i\right\|+\left(2-\frac{\lambda}{\theta}\right) \left\|\sum_{i\in F^c} e_i^* T\otimes e_i\right\|+\lambda\delta \\
   &\leq \beta+ \lambda\delta\\
    & \leq  (\lambda-\frac{1}{8})(1-2\delta)+\lambda\delta\\
    &\leq \lambda(1-\delta)-\frac{1}{8}(1-2\delta)\\
    & < \lambda(1-\delta)-\frac{1}{8}\\
     & < \left\|\lambda\sum_{i\in F} x_{m_i}^*\otimes e_i\right\|-\frac{1}{8}.
    \end{align*}
    
\textbf{Case 2:} Assume $\beta \geq\frac{7}{4}\beta_1$. Let $\delta=2-\gamma$. By the definition of $\beta_1$, we choose a finite subset $F\subset\mathbb{N}$ so that
$$
\left\|\sum_{i\in F} e_i^*T\otimes e_i\right\|>\beta_1-\frac{\delta}{8}.
$$
Since $\left\|e_i^*T\right\|\leq\gamma$ for each $i\in F$, we can find $(x_{m_i}^*)_{i\in F}$ so that
$$
\sum_{i\in F} \left\|{e_i^* T}-x_{m_i}^*\right\|<\frac{\delta}{8}.
$$
Then
\begin{align*}
    \left\|\sum_{i\in F} 2x_{m_i}^*\otimes e_i\right\| & > 2(\beta_1-\frac{\delta}{8})-2\times\frac{\delta}{8}=2\beta_1-\frac{\delta}{2}.
   \end{align*}
Now we will show that $T$ can be covered by the ball centered at $2\sum_{i\in F} x_{m_i}^*\otimes e_i$.

\begin{align*}
\left \|T-2\sum_{i\in F}x_{m_i}^* \otimes e_i\right\|
   &= \left\| \sum_{i=1}^{\infty} e_i^* T \otimes e_i - 2\sum_{i\in F}x_{m_i}^* \otimes e_i\right\| \\
    &\leq \left\| \sum_{i=1}^{\infty} e_i^* T \otimes e_i - 2\sum_{i\in F}e_i^*T \otimes e_i\right\|+2\left\|\sum_{i\in F}x_{m_i}^* \otimes e_i- \sum_{i\in F}e_i^*T \otimes e_i\right\|\\
   &\leq \left\| \sum_{i\in F^c} e_i^* T \otimes e_i-\sum_{i\in F} e_i^* T \otimes e_i \right\| + 2\times\frac{\delta}{8}\\
   &\leq \beta+\frac{\delta}{4}\leq \gamma+\frac{\delta}{4} =2-\frac{3}{4}\delta\\
   &\leq 2\beta_1-\frac{3}{4}\delta\\
   & <  \left\|2\sum_{i\in F} x_{m_i}^*\otimes e_i\right\|-\frac{\delta}{4}.
   \end{align*}

This finishes the proof.

\end{proof}

\begin{cor}
Let $X$ be a Banach space which has a shrinking unconditional basis with unconditional constant less than 2. Then $\mathcal{B}(X)$ has the UBCP.
\end{cor}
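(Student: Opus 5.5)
This corollary follows from the preceding theorem by taking $Y=X$ and $E=\mathcal{B}(X)$. The plan is to check that the hypotheses of that theorem hold for this choice.

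First, let $(e_n)$ be the shrinking unconditional basis of $X$, with unconditional constant $\gamma<2$. The second hypothesis of the theorem asks that $Y$ have an unconditional basis with constant less than $2$. With $Y=X$ and the basis $(e_n)$, this holds at once.

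Second, the first hypothesis asks that $X^*$ be separable. This is where shrinkingness is used. Since $(e_n)$ is shrinking, the biorthogonal functionals $(e_n^*)$ form a Schauder basis of $X^*$. Hence $X^*$ equals the closed linear span of $(e_n^*)$, and finite rational linear combinations of the $e_n^*$ form a countable dense subset. So $X^*$ is separable.

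Third, the theorem applies to any subspace $E$ of $\mathcal{B}(X,Y)$ containing $\mathcal{F}(X,Y)$. Taking $E=\mathcal{B}(X,X)=\mathcal{B}(X)$, which trivially contains $\mathcal{F}(X)$, we conclude that $\mathcal{B}(X)$ has the UBCP.

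There is no real obstacle here. The only point needing a short justification is the standard fact that a shrinking basis yields a separable dual (see e.g. \cite{LT}). No normalization issue arises: the proof of the theorem normalizes the basis itself, and normalizing a basis preserves both the unconditional constant and shrinkingness.
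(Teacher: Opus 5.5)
Your proposal is correct and matches the paper's intended argument. The corollary is stated without proof right after the theorem because it follows by taking $Y=X$ and $E=\mathcal{B}(X)\supseteq\mathcal{F}(X)$, with shrinkingness used exactly as you use it: it makes $(e_n^*)$ a basis of $X^*$, so $X^*$ is separable.
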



\begin{thebibliography}{150}
\bibitem{AH}
S.A. Argyros and R.G. Haydon, A hereditarily indecomposable $\mathcal{L}_\infty$-space that solves the scalar-plus-compact problem, {\em Acta Mathematica}, vol. {\bf 206}, 1-54, (2011).

\bibitem{BLS}
Q. Bao, R. Liu and J. Shen, The ball-covering property of non-commutative spaces of operators on Banach spaces, Banach Journal of Mathematical Analysis 19 (2025), no. 34.

\bibitem{C} 
L. Cheng, Ball-covering property of Banach spaces, Israel J. Math. 156 (2006) 111-123.

\bibitem{CCL} 
L. Cheng, Q. Cheng, X. Liu, Ball-covering property of Banach spaces that is not preserved under linear isomorphisms, Sci. China Ser. A 51 (2008) 143-147.

\bibitem{CWWZ} 
L. Cheng, B. Wang, W. Zhang, Y. Zhou, Some geometric and topological properties of Banach spaces via ball coverings, J. Math. Anal. Appl. 377 (2011) 874-880.

\bibitem{DU}
J. Diestel, J.J.Jr. Uhl; Vector measures. Mathematical Surveys, No. {\bf 15}. American Mathematical Society, Providence, RI, (1977).

\bibitem{FZ} 
V.P. Fonf, C. Zanco, Covering spheres of Banach spaces by balls, Math. Ann. 344 (2009) 939-945.

\bibitem{LLLZ}
M. Liu, R. Liu, J. Lu, B. Zheng; Ball covering property from commutative function spaces to non-commutative spaces of operators. {\em J. Funct. Anal.} {\bf 283} (2022), no. 1, Paper No. 109502, 15 pp.

\bibitem{LT} 
J. Lindenstrauss and L. Tzafriri, {\it Classical Banach Spaces I, Sequence Spaces}, 
Springer-Verlag, New York, 1977.

\bibitem{LZh1} 
Z. Luo, B. Zheng, Stability of ball covering property, Studia Math. 250 (2020) 19-34.

\bibitem{LZh2} 
Z. Luo, B. Zheng, The strong and uniform ball covering properties, J. Math. Anal. Appl. 499 (2021) 125034.

\bibitem{MW}
J. Mach and J. Ward, Approximation by Compact Operators on Certain Banach Spaces, Journal of Approximation Theory 23 (3), (1978), 274-286.

\bibitem{M1}
P. Motakis, Separable spaces of continuous functions as Calkin algebras, Journal of the American Mathematical Society, 37 (1), (2025), 1-37.

\bibitem{SC2} 
S. Shang, Y. Cui, Locally 2-uniform convexity and ball-covering property in Banach space, Banach J. Math. Anal. 9 (2015) 42-53.

\bibitem{SC3} 
S. Shang, Y. Cui, Dentable point and ball-covering property in Banach spaces, J. Convex Anal. 25 (2018) 1045-1058.


\end{thebibliography}
\end{document}